\newtheorem{theorem}{Theorem}[section]
\theoremstyle{assumption}
\newtheorem{assumption}[theorem]{Assumption}
\theoremstyle{definition}
\newtheorem{definition}[theorem]{Definition}
\theoremstyle{remark}
\newtheorem{remark}[theorem]{Remark}
\numberwithin{equation}{section}
\newcommand{\eps}{\varepsilon} 
\newcommand{\norm}[1]{\Vert#1\Vert}
\newcommand{\abs}[1]{\left\vert#1\right\vert}
\newcommand{\inner}[1]{\left(#1\right)}
\newcommand{\comi}[1]{\left<#1\right>}
\newcommand{\normm}[1]{{ \vert\kern-0.25ex \vert\kern-0.25ex \vert #1 
		\vert\kern-0.25ex \vert\kern-0.25ex \vert}}
\def\@startsection#1#2#3#4#5#6{%
	\if@noskipsec \leavevmode \fi
	\par \@tempskipa #4\relax
	\@afterindentfalse
	\ifdim \@tempskipa <\z@ \@tempskipa -\@tempskipa \@afterindentfalse\fi
	\if@nobreak \everypar{}\else
	\addpenalty\@secpenalty\addvspace\@tempskipa\fi
	\@ifstar{\@dblarg{\@sect{#1}{\@m}{#3}{#4}{#5}{#6}}}%
	{\@dblarg{\@sect{#1}{#2}{#3}{#4}{#5}{#6}}}%
}
\def\@settitle{%
	\bgroup
	\centering
	\vglue1cm
	\fontseries{b}\selectfont
	\uppercasenonmath\@title
	\@title
	\vskip20pt plus 6pt minus 8pt
	\egroup
}
\def\@setauthors{%
	\begingroup
	\trivlist
	\centering \bfseries
	\normalsize\@topsep30\p@\relax
	\advance\@topsep by -\baselineskip
	\item\relax
	\andify\authors
	{\rmfamily\authors}%
	\endtrivlist
	\endgroup
}
\def\@setaddresses{\par
	\nobreak \begingroup
	\normalsize
	\def\author##1{\nobreak\addvspace\bigskipamount}%
	\def\\{\unskip, \ignorespaces}%
	\interlinepenalty\@M
	\def\address##1##2{\begingroup
		\par\addvspace\bigskipamount\noindent
		\@ifnotempty{##1}{(\ignorespaces##1\unskip) }%
		{\ignorespaces##2}\par\endgroup}%
	\def\curraddr##1##2{\begingroup
		\@ifnotempty{##2}{\nobreak\indent{\itshape Current address}%
			\@ifnotempty{##1}{, \ignorespaces##1\unskip}\/:\space
			##2\par}\endgroup}%
	\def\email##1##2{\begingroup
		\@ifnotempty{##2}{\nobreak\noindent{\itshape E-mail address}%
			\@ifnotempty{##1}{, \ignorespaces##1\unskip}\/: 
			##2\par}\endgroup}%
	\def\urladdr##1##2{\begingroup
		\@ifnotempty{##2}{\nobreak\indent{\itshape URL}%
			\@ifnotempty{##1}{, \ignorespaces##1\unskip}\/:\space
			\ttfamily##2\par}\endgroup}%
	\addresses
	\endgroup
}
\renewcommand\section{\@startsection{section}{1}{\z@}%
	{27pt plus 6pt minus 8pt}{14pt plus 6pt minus 8pt}
	{\center\normalfont\large\bfseries}}
\begin{document}
	
\title[Well-posedness of the hyperbolic Prandtl equations]{Gevrey well-posedness of the hyperbolic Prandtl  equations}

\author[W.-X. Li and R. Xu]{Wei-Xi Li  \and Rui Xu}

	\date{}

	\address[W.-X. Li]{ 
		School of Mathematics and Statistics,  Wuhan University,  430072 Wuhan, China \\ 
 \& Hubei Key Laboratory of Computational Science,    Wuhan University,  430072 Wuhan, China
	}

	\email{
		wei-xi.li@whu.edu.cn}
		
\address[R. Xu]{ 
		School of Mathematics and Statistics,     Wuhan University,  430072 Wuhan, China
	}		
\email{xurui218@whu.edu.cn}

\keywords{hyperbolic Prandtl boundary layer, well-posedness, Gervey space, abstract Cauchy-Kowalewski theorem}

\subjclass[2020]{35Q35, 35Q30, 76D10, 76D03}

\maketitle

\begin{abstract}
 We study the 2D and 3D  Prandtl equations of degenerate hyperbolic type,   and  establish  without any structural assumption the Gevrey well-posedness with Gevrey index $\leq 2$. Compared with the classical parabolic Prandtl equations,   the loss of the derivatives, caused by the hyperbolic feature coupled with the degeneracy, can't be overcame by virtue of the  classical cancellation mechanism that developed for the parabolic counterpart. 
 Inspired by the abstract Cauchy-Kowalewski theorem and by virtue of the hyperbolic feature,    we give in this text    a straightforward proof, basing on an elementary $L^2$ energy estimate. In particular our argument does not involve the cancellation mechanism used efficiently for the classical Prandtl equations.
\end{abstract}

\section{Introduction and main result}

The classical  Prandtl equation was derived by L.Prandtl  in 1904 as a model to describe the behavior of the flow near the fluid boundary.  It is a degenerate parabolic type equation, losing one order tangential derivatives due to the absence of tangential diffusion.   The mathematical study on  the classical Prandtl boundary layer has a  long history with various approaches developed    and   so far  it is  well-explored in various function spaces,  see, e.g., \cite{MR3327535, MR3795028, MR3670620,  MR3983729, MR3458159, MR3600083, MR3925144,  MR1476316, MR2601044, MR3429469, MR2849481,  MR3461362, MR3284569,     MR3493958, MR4055987,  MR2020656, MR3710703,MR3464051} and the references therein.  Among those works,  there are   two   main settings. One is referred to the Sobolev well/ill-posedness that usually requires  Oleinik's monotonicity assumption (cf. \cite{MR3327535,MR3385340,MR1697762}), and  another is  referred to the Gevrey (including analyticity) well-posedness without any structural assumption.   Note that Oleinik's monotonicty condition is  crucial for the Sobolev well-posedness theory.  On one hand, the classical solution was established by Oleinik (see for instance \cite{MR1697762}) and two groups Alexandre-Wang-Xu-Yang \cite{MR3327535} and Masmoudi-Wong \cite{MR3385340} independently proved the existence and uniqueness theory in the Sobolev spaces by using energy method.   On the other hand,   without Oleinik's monotonicty, the Prandtl equantion may be ill-posed in Sobolev space  (Gevrey space more precisely) that was observed by G\'erard-Varet and Dormy \cite{MR2601044}.  Inspired by the ill-posedness result in \cite{MR2601044, MR3670620},  it is natural  to  expect the Gevery well-posedness whenever the initial data are in Gevery class  $\leq 2$  without any structural assumption.  This  has been confirmed recently  by Dietert and G\'erard-Varet \cite{MR3925144} and  Li-Masmoudi-Yang \cite{lmy}   for the 2D and  the 3D Prandtl equations, respectively,  after the earlier works of \cite{MR3795028, MR3429469,MR4055987} in Gevrey class and  Sammartino-Caflisch's work \cite{MR1617542} in  the analytic setting. Recently, similar problems have been explored   for  MHD boundary layer equations  that are of Prandtl type equations with the strong background magnetic fields; for instance   the stabilizing effect of the magnetic fields       has  been  justified recently by  Liu-Xie-Yang \cite{MR3882222}(see also  \cite{MR4342301, MR4102162} for the further development), and the ill-posed in Sobolev space (more precisely, in Gevrey spaces with index $>2$)  is establised by \cite{MR4102162, MR3864769} and the Gevrey well-posedness is proven by  \cite{MR4270479}.     The aforementioned works are mainly concerned with the local-in-time existence.  We refer to    Xin-Zhang's work  \cite{MR2020656} on global weak solutions under the monotonicity condition and the recent works of Paicu-Zhang \cite{MR4271962} and  Wang-Wang-Zhang \cite{2021arXiv210300681Y} on global small analytic and Gevrey solutions, respectively. The global analytic solutions to MHD boundary layer system were obtainded recently by Liu-Zhang \cite{ MR4213671} and Li-Xie \cite{MR4293727}.

 This work aims to study the well-posedness theory of the hyperbolic version of Prandtl equations.   The hyperbolic boundary layer equations can be  
 derived similarly as the classical Prandtl system,  just following the Prandtl's ansatz when analyzing  the  asymptotic expansion w.r.t. the viscosity  of  the  hyperbolic Navier-Stokes equations which are complemented with  the  no-slip boundary conditions. The hyperbolic version of Navier-Stokes equation  by adding a small hyperbolic perturbation to the classical one,  was initiated by   Cattaneo \cite{MR0032898}.     The classical Navier-Stokes equation has infinite propagation speed which is  non-physical,  and  in order to avoid the non-physical feature Cattaneo \cite{MR0032898} proposed the Cattaneo law instead of the classical Fourier law.  Up to now, there have been extensive  works on the hyperbolic Navier-Stokes equation; see for instance \cite{MR3942552,MR2045417,CHR,MR3085225,MR3085226,MR2404054} and the references therein.  Similar to the  parabolic Naver-Stokes equation, a boundary layer will appear when investigating the hyperbolic counterpart  in a bounded domain  complemented with   the no-slip boundary conditions, and  its governing equation can be derived as below (see Appendix \ref{secderivation} below), just  following the classical Prandtl's ansatz,
 \begin{equation}\label{mhdsys+++}
\left\{
\begin{aligned}
&\big(\partial_t^2   +\partial_t    +u\cdot\partial_x  +v\partial_{y} -\partial_{y}^2\big)u+\partial_xp=0, \quad (x, y)\in  \mathbb R^{n}_+ \\
&\partial_x\cdot u+\partial_{y}v=0, \quad (x, y)\in  \mathbb R^{n}_+ \\
&u|_{ y=0}=0,  \  v|_{ y=0}=0,\quad \lim_{y\rightarrow +\infty}u(t,x,y)=U(t,x), \quad  x\in\mathbb R^{n-1},\\
& u|_{ t=0}=  u_{0} , \quad \partial_t u|_{ t=0}= u_{1}, \quad (x, y)\in  \mathbb R^{n}_+, \\
\end{aligned}
\right.
\end{equation}
where the fluid domain $ \mathbb R^{n}_+=\big\{ (x,y)\in\mathbb R^n;\  x\in\mathbb R^{n-1},  y>0\big \}$ with  $n=2$ or $n=3$,   and  $u, v$   stand for the  
tangential and normal velocity, respectively,  and   $ U(t, x) , p(t, x) $ are  given functions from  the   outflow   that are linked by 
\begin{equation*}
\partial_t^2U+\partial_t U+(U\cdot\partial_x)U+\partial_xp=0.
\end{equation*}   
 For simplicity,  we may assume without loss of generality   that  $U\equiv  0$ in   \eqref{mhdsys+++},  and then the system \eqref{mhdsys+++}  is reduced to
 \begin{equation}\label{hp}
 \left\{
 \begin{aligned}
 &\big(\partial_t^2    +\partial_t    +u\cdot\partial_x +v\partial_{y} -\partial_{y}^2\big)u=0,\\
 &\partial_x\cdot u+\partial_{y}v=0,\\
 &u|_{ y=0}= 0, \quad v|_{ y=0}=0,\quad \lim_{y\rightarrow +\infty}u(t,x,y)= 0,\\
 & u|_{ t=0}=u_{0}, \quad \partial_t u|_{ t=0}=u_{1}.\\
 \end{aligned}
 \right.
 \end{equation}
This text is motivated by the recent work of   Aarach   \cite{2021arXiv211113052A}  and Paicu-Zhang\cite{2021arXiv211112836P}, where the authors investigated  the global well-posedness of  hydrostatic Navier-Stokes equations of hyperbolic version.  For  the classical hydrostatic Navier-Stokes or Euler   equations, it is far from well-explored, and there are only few works. We mention  here the very recent work \cite{MR4149066} of   G\'erard-Varet-Masmoudi-Vicol on the local Gevrey well-posedness of 2D parabolic hydrostatic Navier-Stokes equation under convex assumption. The convex assumption is removed  for the hyperbolic version of  hydrostatic Navier-Stokes equation by the recent  Paicu-Zhang's work \cite{2021arXiv211112836P} where they established the global well-posedness in  Gevrey class 2,  improving the earlier  Aarach's work   \cite{2021arXiv211113052A} in the analytic setting.  This shows the hyperbolic feature  may acts as  stabilizing factor for hydrostatic Navier-Stokes equation.  In this work we will justify the effect of the hyperbolic feature on the Prandtl equations that admit a degeneracy structure different from the hydrostatic Navier-Stokes equation.

 \smallskip
  {\bf Notation.}  Given  the domain $\Omega= \mathbb R_+^n$ with $n=2$ or $3$,   we will use  $\norm{\cdot}_{L^2}$ and $\inner{\cdot, \cdot}_{L^2}$ to denote the norm and inner product of  $L^2=L^2(\Omega)$   and use the notation   $\norm{\cdot}_{L_x^2}$ and $\inner{\cdot, \cdot}_{L_x^2}$  when the variable $x$ is specified. Similar notation  will be used for $L^\infty$.  Moreover, we use $L^p_x(L^q_y) = L^p (\mathbb T; L^q(\mathbb R_+))$ for the classical Sobolev space.
      
\begin{definition} 
\label{defgev}  Let $\ell>1/2 $ be a given number.   With each pair $(\rho,\sigma)$, $\rho>0$ and $\sigma\geq 1, $  the Gevrey  space $X_{\rho,\sigma}$   consists of all  smooth   functions
 $f(t,x)$  such that the Gevrey norm  $\abs{f}_{\rho,\sigma}<+\infty,$  where    $\abs{\cdot}_{\rho,\sigma}$ is defined as below. 
 If $f$ is a function of $x$ variable only  but independent of $t$, then
 \begin{eqnarray*}
\begin{aligned}
	 \abs{f}_{\rho,\sigma}= &\sup_{ \abs\alpha\geq 7} \frac{\rho^{\abs\alpha-7}}{[\inner{\abs\alpha-7}!]^{\sigma}} \Big(   \big\|\comi y^{\ell }  \partial_y\partial_x^\alpha      f \big\|_{L^2}+\abs\alpha\norm{\comi y^{\ell } \partial_x^\alpha      f }_{L^2}\Big)\\
	 &+\sup_{ \abs\alpha\leq 6}  \Big(   \big\|\comi y^{\ell }  \partial_y\partial_x^\alpha      f \big\|_{L^2} +\abs\alpha\norm{\comi y^{\ell } \partial_x^\alpha      f}_{L^2}\Big),
	 \end{aligned}
\end{eqnarray*}
and moreover for functions $f$ of $(t,x)$ variables we define  
\begin{eqnarray*}
\begin{aligned}
	 \abs{f(t)}_{\rho,\sigma}= &\sup_{ \abs\alpha\geq 7} \frac{\rho^{\abs\alpha-7}}{[\inner{\abs\alpha-7}!]^{\sigma}} \Big(  \big\|\comi y^{\ell}  \partial_t\partial_x^\alpha f(t)\big\|_{L^2}+ \big\|\comi y^{\ell }  \partial_y\partial_x^\alpha      f(t)\big\|_{L^2}+\abs\alpha\norm{\comi y^{\ell } \partial_x^\alpha      f(t)}_{L^2}\Big)\\
	 &+\sup_{ \abs\alpha\leq 6}  \Big(  \big\|\comi y^{\ell}  \partial_t\partial_x^\alpha f(t)\big\|_{L^2}+ \big\|\comi y^{\ell }  \partial_y\partial_x^\alpha      f(t)\big\|_{L^2}+\abs\alpha\norm{\comi y^{\ell } \partial_x^\alpha      f(t)}_{L^2}\Big),
	 \end{aligned}
\end{eqnarray*}
where and throughout the paper $\comi y=(1+\abs y^2)^{1/2}.$  We call $\sigma$ the Gevrey index.  
\end{definition} 

Note if $f\in X_{\rho,\sigma}$, then $f$ is of Gevrey class and Sobolev, respectively, in tangential variable $x$ and normal variable $y.$  
The  main result can be stated as below. 
  
\begin{theorem}\label{th2d}
	Let $1\leq \sigma\leq 2$ and $0<\rho_0 \leq 1$.  Suppose the initial data in \eqref{hp} satisfy that  $ u_{0},u_{1}  \in X_{2\rho_0, \sigma}$,  compatible with  the boundary condition in \eqref{hp}.	  Then the 2D or 3D hyperbolic Prandtl system \eqref{hp} admits a unique solution $u \in L^\infty\big([0,T];~X_{\rho,\sigma}\big)$ for some $T>0$ and some $0<\rho<2\rho_0 $.  
\end{theorem}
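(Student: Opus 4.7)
The plan is to combine an elementary weighted $L^2$ energy estimate at each tangential-derivative level with an abstract Cauchy-Kowalewski type argument, in which a time-decaying Gevrey radius $\rho(t)$ absorbs the loss of one tangential derivative produced by the convective nonlinearities. First I would eliminate the normal velocity via $v(t,x,y)=-\int_0^y \partial_x\cdot u(t,x,y')\,dy'$, turning \eqref{hp} into a nonlocal second-order-in-time equation for $u$ alone.

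For each multi-index $\alpha$, applying $\partial_x^\alpha$ and testing against $\comi y^{2\ell}\partial_t\partial_x^\alpha u$ yields, after integration by parts in $y$,
\begin{align*}
	&\frac{d}{dt}\Bigl(\norm{\comi y^{\ell}\partial_t\partial_x^\alpha u}_{L^2}^2+\norm{\comi y^{\ell}\partial_y\partial_x^\alpha u}_{L^2}^2\Bigr)+2\norm{\comi y^{\ell}\partial_t\partial_x^\alpha u}_{L^2}^2 \\
	&\qquad = (\text{weight errors}) + \inner{\comi y^{2\ell}\partial_t\partial_x^\alpha u,\,\partial_x^\alpha(u\cdot\partial_x u+v\partial_y u)}_{L^2},
\end{align*}
together with a parallel identity for $|\alpha|^2\norm{\comi y^{\ell}\partial_x^\alpha u}_{L^2}^2$ obtained by testing against $|\alpha|^2\comi y^{2\ell}\partial_x^\alpha u$. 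The damping $2\norm{\comi y^{\ell}\partial_t\partial_x^\alpha u}_{L^2}^2$ on the left is the crucial new ingredient absent from the parabolic Prandtl setting; via Cauchy-Schwarz it absorbs the $L^2$ norm of the nonlinear commutator on the right without any Oleinik-type cancellation, realising the key point announced in the abstract.

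Next I would estimate $\partial_x^\alpha(u\cdot\partial_x u+v\partial_y u)$ in $L^2(\comi y^{2\ell}dxdy)$ by Leibniz and high-low splitting. Low-order factors are handled by Sobolev embedding in $x$ combined with the 1D trace $\norm{f}_{L^\infty_y}\lesssim\norm{f}_{L^2_y}^{1/2}\norm{\partial_y f}_{L^2_y}^{1/2}$, and the $v$-terms exploit the weighted Hardy bound $\norm{v}_{L^\infty_y}\lesssim\norm{\comi y^{\ell}\partial_x\cdot u}_{L^2_y}$ valid for $\ell>1/2$. The only unavoidable loss is the top term in which all $x$-derivatives fall on $v$ (or on $\partial_x u$), producing one extra tangential derivative. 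Multiplying the pointwise-in-$\alpha$ estimate by the Gevrey weight $\rho^{|\alpha|-7}/[(|\alpha|-7)!]^\sigma$, taking $\sup_\alpha$, and using the elementary maximization $\sup_{n\geq 0} n^\sigma(\rho'/\rho)^n\lesssim(\sigma/(\rho-\rho'))^\sigma$ to trade the lost derivative for a factor $(\rho-\rho')^{-\sigma}$, one arrives at a schematic differential inequality
\begin{align*}
	\frac{d}{dt}\abs{u(t)}_{\rho(t),\sigma}^2\leq\bigl(\dot\rho(t)+C\abs{u(t)}_{\rho(t),\sigma}\bigr)B(t)+C\abs{u(t)}_{\rho(t),\sigma}^3,
\end{align*}
where $B(t)$ is a ``one-derivative-more'' Gevrey quantity controlled by $\abs{u(t)}_{\rho(t),\sigma}$ up to the $(\rho-\rho')^{-\sigma}$ factor above. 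Choosing $\rho(t)=2\rho_0-\lambda t$ with $\lambda$ large depending on $\abs{u_0}_{2\rho_0,\sigma}+\abs{u_1}_{2\rho_0,\sigma}$ makes the coefficient of $B(t)$ nonpositive and closes the estimate on some short interval $[0,T]$.

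The main obstacle is the borderline nature of $\sigma=2$: the loss of exactly one tangential derivative matches precisely the factor $(\rho-\rho')^{-\sigma}$ in the Gevrey scale, so $\sigma>2$ would break the argument and for $\sigma=2$ the numerical constants in the Leibniz splitting and in the maximization lemma must be tracked carefully. A secondary nuisance is the weight $\comi y^{2\ell}$ under integration by parts in $y$, producing a cross term $\ell y\comi y^{2\ell-2}\partial_t\partial_x^\alpha u\cdot\partial_y\partial_x^\alpha u$ to be absorbed by the energy plus damping, but this is routine. Once the a priori bound is available, existence of the Gevrey solution is obtained by applying the same estimate to a viscous regularization of \eqref{hp} (adding $\eps\partial_x^2 u$, say) with uniform-in-$\eps$ bounds and passing to the limit; uniqueness follows from an $L^2$ difference estimate in a slightly smaller Gevrey space.
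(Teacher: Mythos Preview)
Your overall strategy---hyperbolic $L^2$ energy at each tangential level, Leibniz/high--low splitting of the convective terms, and a shrinking Gevrey radius to absorb the loss---is the same as the paper's. But you have misidentified the mechanism that makes $\sigma=2$ reachable, and this produces a genuine gap.

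The damping term $2\|\comi y^\ell\partial_t\partial_x^\alpha u\|_{L^2}^2$ is \emph{not} the crucial ingredient. In the paper's proof it sits on the left of \eqref{uma++} with a good sign and is simply discarded; it is never used to absorb the nonlinearity. Indeed, Cauchy--Schwarz against the damping leaves behind $\|\partial_x^\alpha(u\cdot\partial_x u+v\partial_y u)\|_{L^2}^2$, which still carries the full one-derivative loss, so nothing is gained. The actual key is the factor $|\alpha|$ multiplying $\|\comi y^\ell\partial_x^\alpha u\|_{L^2}$ in the definition of $|\cdot|_{\rho,\sigma}$. For the worst nonlinear piece (the $j=0$ term in \eqref{i2+}) one has $C_*\|\comi y^\ell\partial_x^{m+1}u\|_{L^2}\|\comi y^\ell\partial_t\partial_x^m u\|_{L^2}$, and the norm gives $(m{+}1)\|\comi y^\ell\partial_x^{m+1}u\|_{L^2}\le[(m{-}6)!]^\sigma\tilde\rho^{-(m-6)}|u|_{\tilde\rho,\sigma}$, so a factor $1/(m{+}1)$ is recovered. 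After the factorial ratio the residual growth is $(m{-}6)^\sigma/(m{+}1)\sim m^{\sigma-1}\le m$ exactly when $\sigma\le 2$; this single power of $m$ then converts to $(\tilde\rho-\rho)^{-1}$ via $\sup_k k r^k\le(1-r)^{-1}$, as in \eqref{kefact}. The same $|\alpha|$-weight is what makes Step~(a) close: one does not test the equation but simply writes $m^2\frac{d}{dt}\|\partial_x^m u\|^2=2m\cdot(m\|\partial_x^m u\|)\cdot\|\partial_t\partial_x^m u\|$ and controls each factor by the norm.

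Consequently your claimed loss $(\rho-\rho')^{-\sigma}$, coming from $\sup_n n^\sigma(\rho'/\rho)^n$, is one power too pessimistic: you have not used the $|\alpha|$-weight. With that loss, the ``$B(t)$'' produced by the nonlinearity scales like $m^\sigma$ while $\dot\rho\,\partial_\rho|u|_{\rho}^2$ scales like $m$, so for $\sigma=2$ the two do not match and one cannot arrange $(\dot\rho+C|u|)B\le 0$ with a single $B$. Once the $|\alpha|$-weight is invoked correctly, both contributions scale like $m$ and your shrinking-radius argument (equivalently, the paper's a~priori estimate in Theorem~\ref{thmapri} followed by the abstract Cauchy--Kowalewski scheme) closes.
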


\begin{remark}
	The above result shows that the well-posedness theory of hyperbolic Prandtl equation is not worse than the one for the parabolic counterpart.  For the classical Prandtl equation it is well-known 2 is the critical  Gevrey index for the well-posedness, seeing for instance \cite{MR3925144,MR2601044, lmy}. 	
	 Naturally we would ask  the critical Gevrey index for the hyperbolic version  of Prandtl equations, which remains unclear so far. 
\end{remark}

\begin{remark}
	As to be seen below, 
although the  cancellation mechanism is an efficient tool when investigating the well-posedness theory for the classical   Prandtl equations, it can not apply to the hyperbolic version. Inspired by the abstract Cauchy-Kowalewski theorem and by virtue of the hyperbolic feature,    we present in this text    a straightforward proof of the main result, basing on an elementary $L^2$ energy estimate. In particular our argument does not involve the cancellation mechanism developed for the classical Prandtl equations.  We believe the argument presented here can apply to other type  hyperbolic equations with loss of derivatives. 
\end{remark}

 \section{A priori estimate and the methodology}\label{secapriori}
 
  \subsection{A priori estimate}
  
 The key part for proving the main result Theorem \ref{th2d} is to establish a priori estimate for  the system \eqref{hp}. In fact the existence and uniqueness theory will follow  two main strategies, one refers to the construction of approximate solutions for a regularized hyperbolic Prandtl equations which is quite standard since we can apply the classical hyperbolic theory, the other refers to the derivation of an uniform estimate for these approximate solutions, and this follows from the same    argument  for proving a priori estimate.    For sake of simplicity, we will only prove  a priori estimate for regular solutions.

   \begin{assumption}\label{assmain}
	Let  $X_{\rho,\sigma}$ be the Gevrey function space  equipped with the norm $\abs{\cdot}_{\rho,\sigma}$ given in  Definition \ref{defgev}, and let       $u\in L^\infty\inner{[0, T];~X_{\rho_0,\sigma}}$ solve the hyperbolic system    \eqref{hp}   with initial data $u_0,u_1\in X_{2\rho_0,\sigma}$ for some $0<\rho_0\leq 1$.     Moreover  we  suppose  that there exists a constant $C_*$     such that   
   \begin{equation}\label{condi1}
 \forall\  t\in[0,T],\quad  \sup_{ \abs\alpha \leq  2}   \big\| \partial_x^\alpha    u(t)\big\|_{L^2}+   \sup_{ \abs\alpha \leq  2}   \big\| \partial_x^\alpha   \partial_y u(t)\big\|_{L^2} \leq  C_*,
\end{equation}
where the constant $C_*\geq 1$  depends only on $\abs{ u_0 }_{2\rho_0, \sigma}$, $\abs{ u_1 }_{2\rho_0, \sigma}$,  the Sobolev embedding  constants and the numbers $\rho_0, \ell$ that are given in Definition \ref{defgev}. 
\end{assumption}

 \begin{theorem}
 	\label{thmapri}
Let $1\leq \sigma\leq 2$. Under Assumption \ref{assmain} we can find a constant $C$,  depending only on the Sobolev embedding  constants and the numbers $\rho_0,  \ell$ that are given in Definition \ref{defgev}, such that for any $t\in [0,T]$ the following estimate
\begin{eqnarray*}
	 \abs{u(t)}_{\rho,\sigma}^2 \leq  C(\abs{ u_0 }_{2\rho_0, \sigma}^2+\abs{ u_1 }_{2\rho_0, \sigma}^2)+ C   \int_0^{t}  \inner{ \abs{u(s)}_{ \rho,\sigma}^2+\abs{u(s)}_{ \rho,\sigma}^3} ds+CC_*   \int_0^{t}  \frac{\abs{u(s)}_{ \tilde\rho,\sigma}^2}{  \tilde\rho-\rho}  ds 
\end{eqnarray*}
holds true for any pair $(\rho, \tilde\rho)$ with  $0<\rho<\tilde\rho\leq\rho_0$.
 \end{theorem}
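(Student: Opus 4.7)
The plan is to derive the a priori bound one multi-index $\alpha$ at a time: perform a weighted $L^2$ energy estimate on the equation satisfied by $u_\alpha := \partial_x^\alpha u$, multiply by the Gevrey weight $\rho^{|\alpha|-7}/((|\alpha|-7)!)^\sigma$, and take the supremum over $\alpha$ at the end. Applying $\partial_x^\alpha$ to the first equation of \eqref{hp} yields
\begin{equation*}
\partial_t^2 u_\alpha + \partial_t u_\alpha - \partial_y^2 u_\alpha = -\partial_x^\alpha(u\cdot\partial_x u + v\partial_y u).
\end{equation*}
The left-hand side is a damped-wave-plus-vertical-diffusion operator: testing against $\langle y\rangle^{2\ell}\partial_t u_\alpha$ produces $\tfrac{d}{dt}\inner{\|\langle y\rangle^\ell\partial_t u_\alpha\|_{L^2}^2+\|\langle y\rangle^\ell\partial_y u_\alpha\|_{L^2}^2}$ together with a $\|\langle y\rangle^\ell\partial_t u_\alpha\|_{L^2}^2$ damping (using $u_\alpha|_{y=0}=0$ and absorbing the $\partial_y\langle y\rangle^{2\ell}$ commutator into that damping), while a companion test against $|\alpha|^2\langle y\rangle^{2\ell}u_\alpha$ supplies $\tfrac{d}{dt}\inner{|\alpha|^2\|\langle y\rangle^\ell u_\alpha\|_{L^2}^2}$. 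These three pieces match exactly the components of Definition \ref{defgev}.

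The core of the proof lies on the right-hand side. Expanding via Leibniz
\begin{equation*}
\partial_x^\alpha(u\cdot\partial_x u)=\sum_{\beta\leq\alpha}\binom{\alpha}{\beta}(\partial_x^\beta u)(\partial_x^{\alpha-\beta+1}u),
\end{equation*}
and analogously for $\partial_x^\alpha(v\partial_y u)$ using $\partial_x^\beta v=-\int_0^y(\partial_x^{\beta+1}\!\cdot\!u)\,dy'$, I would split each sum into three regimes: (a) if at most $\sim|\alpha|/2$ tangential derivatives fall on one factor, Sobolev embedding in $x$ combined with \eqref{condi1} places that factor in $L^\infty$ and produces the prefactor $C_*$; (b) the complementary bulk contributions are controlled by Gevrey product estimates based on $\sigma\geq 1$ and the standard combinatorial bound $\binom{|\alpha|}{|\beta|}(|\beta|!\,(|\alpha|-|\beta|)!)^\sigma \leq (|\alpha|!)^\sigma$, yielding contributions absorbed into $|u|_{\rho,\sigma}^2+|u|_{\rho,\sigma}^3$; (c) the only summands with a genuine derivative loss are the extremes $u\,\partial_x u_\alpha$ and $(\partial_x^\alpha v)\,\partial_y u$. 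For the second, the weighted Hardy-type inequality $\|v(t,x,\cdot)\|_{L^\infty_y}\lesssim\|\langle y\rangle^\ell(\partial_x\!\cdot\!u)(t,x,\cdot)\|_{L^2_y}$ (valid since $\ell>1/2$) reduces the $L^2$-norm of the contribution to one involving $\|\langle y\rangle^\ell\partial_x^{\alpha+1}u\|_{L^2}$, while placing $\partial_y u\in L^\infty_{x,y}$ by \eqref{condi1} produces $C_*$; the first is handled by placing $u\in L^\infty$ via \eqref{condi1}, and likewise produces $C_*\,\|\langle y\rangle^\ell\partial_x^{\alpha+1}u\|_{L^2}$.

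The single lost tangential derivative in regime (c) is absorbed by the Cauchy--Kowalewski radius shift. Combining the elementary inequality
\begin{equation*}
\frac{\rho^{|\alpha|-7}}{((|\alpha|-7)!)^\sigma}\,(|\alpha|+1) \;\lesssim\; \frac{1}{\tilde\rho-\rho}\cdot\frac{\tilde\rho^{|\alpha|-6}}{((|\alpha|-6)!)^\sigma}\qquad(0<\rho<\tilde\rho\leq\rho_0)
\end{equation*}
with Cauchy--Schwarz converts that contribution into $C_*\,|u(t)|_{\tilde\rho,\sigma}^2/(\tilde\rho-\rho)$, matching the third term in the statement. Integrating in time from $0$, controlling the initial values by $|u_0|_{2\rho_0,\sigma}^2+|u_1|_{2\rho_0,\sigma}^2$, and taking the supremum over $\alpha$ (with separate bookkeeping for the ranges $|\alpha|\leq 6$ and $|\alpha|\geq 7$ appearing in Definition \ref{defgev}) closes the estimate. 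I expect the principal obstacle to be the bookkeeping in regime (b): although no individual bulk term is dangerous, one must match every Leibniz summand cleanly against one of the two pieces $\|\langle y\rangle^\ell\partial_y\partial_x^\alpha u\|$ or $|\alpha|\|\langle y\rangle^\ell\partial_x^\alpha u\|$ of the norm without producing a stray factor of $|\alpha|$ or a spurious derivative shift. The factor $|\alpha|$ built into Definition \ref{defgev}, together with the companion test function $|\alpha|^2\langle y\rangle^{2\ell}u_\alpha$, is precisely tuned to make this absorption possible and is what allows the argument to avoid the cancellation mechanism used for the parabolic Prandtl equations.
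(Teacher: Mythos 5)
Your overall strategy agrees with the paper's Step~(b): test $\partial_x^\alpha$ of the equation against $\langle y\rangle^{2\ell}\partial_t u_\alpha$, expand via Leibniz, place the low-derivative factor in $L^\infty$ using Sobolev embedding and \eqref{condi1} (which is how $C_*$ appears), use the weighted Hardy inequality for $v$, and absorb the endpoint loss of one tangential derivative with a single Cauchy--Kowalewski radius shift based on $k r^k\leq(1-r)^{-1}$. But your treatment of the third piece of the norm, $|\alpha|^2\|\langle y\rangle^\ell\partial_x^\alpha u\|_{L^2}^2$, has a genuine gap.

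You propose a \emph{companion test of the equation} against $|\alpha|^2\langle y\rangle^{2\ell}u_\alpha$. The damping term $\partial_t u_\alpha$ indeed yields $\tfrac{1}{2}|\alpha|^2\frac{d}{dt}\|\langle y\rangle^\ell u_\alpha\|_{L^2}^2$, but the other two terms in the operator do not cooperate. Integrating $\inner{\partial_t^2 u_\alpha,\langle y\rangle^{2\ell}u_\alpha}_{L^2}$ by parts in time produces, after multiplying by $|\alpha|^2$ and integrating over $[0,t]$, the two contributions
\begin{equation*}
|\alpha|^2\,\inner{\langle y\rangle^{\ell}\partial_t u_\alpha(t),\,\langle y\rangle^{\ell} u_\alpha(t)}_{L^2}
\qquad\text{and}\qquad
-\,|\alpha|^2\int_0^t\big\|\langle y\rangle^\ell\partial_t u_\alpha(s)\big\|_{L^2}^2\,ds .
\end{equation*}
The second, once moved to the right-hand side, requires you to control $|\alpha|^2\|\langle y\rangle^\ell\partial_t u_\alpha\|_{L^2}^2$ under the time integral. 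But Definition~\ref{defgev} controls $\|\langle y\rangle^\ell\partial_t\partial_x^\alpha u\|_{L^2}$ with \emph{no} extra $|\alpha|$ factor, so the full $|\alpha|^2$ must be absorbed by the radius shift; since one application of $kr^k\leq(1-r)^{-1}$ only eats one factor of $|\alpha|$, you end up with $(\tilde\rho-\rho)^{-2}$ rather than the $(\tilde\rho-\rho)^{-1}$ in the statement. The first contribution is worse: it sits at the endpoint time $t$, not under an integral, so no radius shift is available at all, and the bound $|\alpha|\cdot\|\langle y\rangle^\ell\partial_t u_\alpha(t)\|_{L^2}\cdot|\alpha|\|\langle y\rangle^\ell u_\alpha(t)\|_{L^2}$ leaves a stray factor $|\alpha|$ on $|u(t)|_{\rho,\sigma}^2$ that the right-hand side of the theorem cannot accommodate. (The term $+|\alpha|^2\int_0^t\|\langle y\rangle^\ell\partial_y u_\alpha\|_{L^2}^2\,ds$ generated by $-\partial_y^2 u_\alpha$ has the favorable sign and can be discarded, but it also does not match the norm.)

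The paper's Step~(a) sidesteps all of this by \emph{not} testing the equation at all for this component. Since $\|\langle y\rangle^\ell\partial_t\partial_x^\alpha u\|_{L^2}$ is itself one of the three quantities bundled into $|\cdot|_{\rho,\sigma}$, one simply writes the pure calculus identity
\begin{equation*}
m^2\big\|\langle y\rangle^\ell\partial_x^m u(t)\big\|_{L^2}^2 = m^2\big\|\langle y\rangle^\ell\partial_x^m u_0\big\|_{L^2}^2 + 2m^2\int_0^t{\rm Re}\,\inner{\langle y\rangle^\ell\partial_x^m u,\ \langle y\rangle^\ell\partial_t\partial_x^m u}_{L^2}\,ds ,
\end{equation*}
applies Cauchy--Schwarz, and observes that one factor of $m$ is already absorbed by $m\|\langle y\rangle^\ell\partial_x^m u\|_{L^2}$ in Definition~\ref{defgev}, leaving a single factor $m$ to be handled by a single radius shift, yielding exactly $(\tilde\rho-\rho)^{-1}$. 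This is precisely where the hyperbolic structure enters the proof: because $\partial_t u$ is built into the norm (and its size is controlled by the genuine energy estimate of Step~(b)), the third piece of the norm is recovered \emph{for free}, with no need to invoke the PDE. Your companion test re-introduces the equation where it is not needed, and the $\partial_t^2$ and $\partial_y^2$ terms it carries along generate contributions with the wrong power of $|\alpha|$.
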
  
 
 Once we have the a priori estimate in Theorem \ref{thmapri} the existence and uniqueness for the hyperbolic system \eqref{hp} will follow,  just repeating the presentation of \cite[Section 6]{lmy} or \cite[Section 8]{MR4055987} with slight modification.  
 
 \subsection{Difficulties and Methodologies} As for the classical   Prandtl equation, the main difficulty arises from the loss of tangential derivatives due the absence of diffusion in $x$ variable.  So far it is well-explored to overcome the loss of derivatives when investigating the well-posedness  of the classical Prandtl equation, seeing for instance the recent works \cite{MR3327535, MR3925144, lmy, MR3385340}, where the main idea involved  is the cancellation mechanism initiated by \cite{MR3327535,MR3385340}. Note that these cancellation technique can not apply to the hyperbolic Prandtl system.  Precisely when performing energy estimate for system \eqref{hp} we have 
 \begin{eqnarray*}
 	\frac{1}{2}\frac{d}{dt}\inner{\norm{\partial_t u}_{L^2}^2+\norm{\partial_y u}_{L^2}^2}+\norm{\partial_t u}_{L^2}^2=-(u\partial_x u, \ \partial_t u)_{L^2}-(v\partial_y u, \ \partial_t u)_{L^2},
 \end{eqnarray*}  
 where the loss of one order tangential derivatives occurs in the   terms on the right side with the main difficulty arising from the first one.  To overcome the loss of derivatives it relies on the observation that  we only lose half order rather than one order derivatives  due to the hyperbolic feature.    Our argument is inspired by the  abstract Cauchy-Kowalewski theorem,  whose statement in general Banach scales can be found in  \cite{MR966397, MR322321}  as well as the references therein; see \cite{lmy, MR4055987, MR1617542}  as well for its application  to the Well-posedness theory of classical Prandtl equations  in analytic or Gevrey  spaces.   Consider    the Cauchy  problem  \begin{eqnarray*}
 	\partial_t h=F(t, h, \partial_x h),\quad  h|_{t=0}=h_0,
 \end{eqnarray*}
 and for given analytic data  $F$ and $h_0$,   we may only expect, by virtue of  the classical Cauchy-Kowalewski theorem,  the local existence and uniqueness in analytic space for the Cauchy problem above   since it loses in general one order derivative.  Moreover as far as  the following  Cauchy problem
  \begin{eqnarray}\label{g2}
 	\partial_t^2 h=F(t, h,\nabla h),\quad h|_{t=0}=h_0, \ ~  \partial _th |_{t=0}=h_1,
 	 \end{eqnarray}
 	 is concerned, 
   the existence theory can be extended to any Gevrey space once the Gevrey index $\leq 2$ by using abstract Cauchy-Kowalewski theorem in Gevrey class.  We refer to \cite{lmy} for the application of the abstract Cauchy-Kowalewski theorem of Gevrey version by exploring the intrinsic structure similar as in \eqref{g2}.  Note  the hyperbolic version of Prandtl system \eqref{hp} may be viewed as the type of \eqref{g2} due to the hyperbolic factor $\partial_t^2$ in \eqref{hp}.

\section{Proof of the a priori estimate}\label{subap} 

This part is devoted to proving the a priori estimate stated in Theorem \ref{thmapri}. Without loss of generality we only present in detail the proof of Theorem \ref{thmapri} for $n=2$, and the three-dimensional case  can be treated in the same way. 

\begin{proof}
	[Proof of Theorem \ref{thmapri} (Two-dimensional case)]
We proceed through several steps to deal with the terms involved in Definition \ref{defgev} of $\abs{u}_{\rho,\sigma}$. In the following argument we suppose $t$ is  fixed with $0<t\leq T$. To simplify the notation we use the capital letters $ C$  to denote some generic constant that may vary from line to line, depending only on the Sobolev embedding constants and the numbers $\rho_0,\ell$ given in Definition \ref{defgev}  but independent of the constant $C_*$ in \eqref{condi1} and the order of derivatives denoted by $m$. 

\medskip
\noindent{\it Step (a)}.  In this step, we conclude that for any $m\geq 7$ and any  pair $(\rho,\tilde\rho)$ with $0<\rho<\tilde\rho\leq\rho_0$,  
\begin{equation}\label{sta++}
	\frac{\rho^{2(m-7)}} {[\inner{m-7}!]^{2\sigma}}m^2\norm{\comi y^\ell\partial_x^mu(t)}_{L^2}^2 \leq C  \abs{u_0}_{2\rho_0,\sigma}^2+ C \int_0^{t}  \frac{\abs{u(s)}_{ \tilde\rho,\sigma}^2}{  \tilde\rho-\rho}  ds.
\end{equation}  
In fact, 
 it follows  from Definition \ref{defgev} of $\abs{u}_{r,\sigma}$  that,  for any  integer $j\geq 0$  and  for any $0<r\leq \rho_0$, 
  \begin{equation}\label{uint+}
 	   \norm{\comi y^{\ell}\partial_t\partial_x^j  u }_{L^2} + \norm{\comi y^{\ell}\partial_y \partial_x^j  u }_{L^2}+j\norm{ \comi y^{\ell}\partial_x^j  u }_{L^2}  \leq   
 	 \left\{
 	 \begin{aligned} 
 	&  \frac{[(j-7)!]^\sigma}{r^{j-7}}\abs{u}_{r,\sigma},\quad {\rm if}~j \geq 7,\\
 	 & 	\abs{u}_{r,\sigma}, \quad {\rm if}~j \leq 6.
 	 \end{aligned}
\right.
 	  \end{equation}
As a result, 
\begin{equation}\label{deq}
	\begin{aligned}
		&m^2\int_0^t    \norm{\comi y^\ell\partial_x^mu(s)}_{L^2} \norm{ \comi y^\ell\partial_t\partial_x^mu(s)}_{L^2} ds\\
		&\leq  \int_0^t  m   \frac{[(m-7)!]^{2\sigma}}{\tilde\rho^{2(m-7)}} \abs{u(s)}_{ \tilde\rho,\sigma}^2ds\leq C  \frac{[\inner{m-7}!]^{2\sigma}}{\rho^{2(m-7)}}  \int_0^{t}  \frac{\abs{u(s)}_{ \tilde\rho,\sigma}^2}{  \tilde\rho-\rho}  ds,
	\end{aligned}
\end{equation}
the last inequality holding because for any pair $(\rho,\tilde\rho)$ with $0<\rho<\tilde\rho\leq \rho_0\leq 1,$
\begin{equation}\label{kefact}
   \frac{m}{\tilde\rho^{2(m-7)}} \leq 	\frac{m}{\tilde \rho} \frac{1}{\tilde\rho^{2(m-7)}}  =  \frac{1}{\rho^{2(m-7)}}\frac{m}{\tilde\rho}\frac{\rho^{2(m-7)}}{\tilde\rho^{2(m-7)}}\leq  \frac{C}{\rho^{2(m-7)}} \frac{m-7 }{\tilde\rho}\Big(\frac{\rho}{\tilde\rho}\Big)^{m-7}    \leq   \frac{C}{\rho^{2(m-7)}}  \frac{1}{\tilde\rho-\rho} 
\end{equation}
due to the fact that the inequality $ 
  k r^k\leq \frac{1}{1-r}$ holds true for any integer $k\geq 0$ and any number $0<r<1.$  On the other hand, observing $\rho\leq\rho_0$, 
  \begin{multline*}
  		m^2\norm{\comi y^\ell\partial_x^mu_0}_{L^2}^2\leq m^2\frac{[(m-7)!]^{2\sigma}}{ (2\rho_0)^{2(m-7)}}\abs{u_0}_{2\rho_0,\sigma}^2\\
  		\leq m^2\frac{\rho^{2(m-7)}}{ (2\rho_0)^{2(m-7)}}\frac{[(m-7)!]^{2\sigma}}{  \rho^{2(m-7)}}\abs{u_0}_{2\rho_0,\sigma}^2 \leq C \frac{[(m-7)!]^{2\sigma}}{  \rho^{2(m-7)}}\abs{u_0}_{2\rho_0,\sigma}^2.
  	\end{multline*}
  Combining the above inequality  and \eqref{deq} with the fact that
\begin{eqnarray*}
	\begin{aligned}
		m^2\norm{\comi y^\ell\partial_x^mu(t)}_{L^2}^2&= m^2\norm{\comi y^\ell\partial_x^mu_0}_{L^2}^2+m^2\int_0^t\frac{d}{ds} \norm{\comi y^\ell\partial_x^mu(s)}_{L^2}^2 ds\\
		&= m^2\norm{\comi y^\ell\partial_x^mu_0}_{L^2}^2+2m^2\int_0^t  {\rm Re} \inner{\comi y^\ell\partial_x^mu(s), \ \comi y^\ell(\partial_t\partial_x^mu)(s)}_{L^2}  ds\\
		&\leq m^2\norm{\comi y^\ell\partial_x^mu_0}_{L^2}^2+2m^2\int_0^t   \norm{\comi y^\ell\partial_x^mu(s)}_{L^2}\norm{  \comi y^\ell(\partial_t\partial_x^mu)(s)}_{L^2}  ds\\
		&\leq C  \frac{[\inner{m-7}!]^{2\sigma}}{\rho^{2(m-7)}}\Big( \abs{u_0}_{2\rho_0,\sigma}^2 +\int_0^{t}  \frac{\abs{u(s)}_{ \tilde\rho,\sigma}^2}{  \tilde\rho-\rho}  ds\Big), 
	\end{aligned}
\end{eqnarray*} 	  
 	  we obtain the assertion \eqref{sta++}.

\medskip
\noindent{\it Step (b)}. This step is devoted to proving that, for any $m\geq 7$ and any  pair $(\rho,\tilde\rho)$ with $0<\rho<\tilde\rho\leq\rho_0$,  
\begin{equation}\label{sta}
\begin{aligned}
	&\frac{\rho^{2(m-7)}} {[\inner{m-7}!]^{2\sigma}}\inner{\norm{\comi y^\ell\partial_t\partial_x^mu(t)}_{L^2}^2+\norm{\comi y^\ell\partial_y\partial_x^mu(t)}_{L^2}^2} \\
	& \leq  C\inner{\abs{u_0}_{2\rho_0,\sigma}+\abs{u_1}_{2\rho_0,\sigma}}  +   C  \int_0^{t} \inner{ \abs{u(s)}_{ \rho,\sigma}^2+\abs{u(s)}_{ \rho,\sigma}^3}ds+ CC_* \int_0^{t}  \frac{\abs{u(s)}_{ \tilde\rho,\sigma}^2}{  \tilde\rho-\rho}  ds,
	\end{aligned}
\end{equation}  
where $C_*$ is the number given in \eqref{condi1}. 	  
To do so, applying  $\partial_x^m$ to the first equation in \eqref{hp} gives
\begin{equation}\label{fuc} 
\begin{aligned}
	 \big(\partial_t^2+\partial_t -\partial_y^2\big)   \partial_x^{m} u  =   -\sum_{j=0}^{m}{m\choose j}  \Big[(\partial_x^j u) \partial_{x}^{m-j+1}u   +(\partial_x^j v)\partial_{x}^{m-j}\partial_yu \Big].
	\end{aligned}
\end{equation} 
Furthermore, multiplying the both sides of  \eqref{fuc} by  $\comi y^{2\ell} \partial_t\partial_x^mu $ and then integrating over $[0, t]\times \mathbb R_+^2$, we obtain, observing the initial-boundary conditions in \eqref{hp},
\begin{equation}\label{uma++} 
\begin{aligned}
&\frac{1}{2}\norm{\comi y^{ \ell}\partial_t\partial_x^{m} u (t)}_{L^2}^2 +\frac{1}{2}\norm{\comi y^{ \ell}\partial_y\partial_x^{m} u (t)}_{L^2}^2
+\int_0^{t} \norm{\comi y^{ \ell}\partial_t\partial_x^{m} u (s)}_{L^2}^2ds 
\\&
= \frac{1}{2}\norm{\comi y^{ \ell} \partial_x^{m} u_1}_{L^2}^2 +\frac{1}{2}\norm{\comi y^{ \ell}\partial_y\partial_x^{m} u_0}_{L^2}^2 \\
&\quad -\sum_{j=0}^{m}{m\choose j}\int_0^{t} \Big( \comi y^{\ell} \big[(\partial_x^j u) \partial_{x}^{m-j+1}u   +(\partial_x^j v)\partial_{x}^{m-j}\partial_yu \big]  ,\ \comi y^{\ell}  \partial_t\partial_x^{m}u \Big)_{L^2}ds\\
& \quad - \int_0^t\inner{  \partial_y\partial_x^{m}u, \    (\partial_y\comi y^{2\ell})\partial_t\partial_x^{m}u}_{L^2}ds.
\end{aligned}
\end{equation}	
It follows from Definition \ref{defgev} that
\begin{multline*}
	\frac{1}{2}\norm{\comi y^{ \ell} \partial_x^{m} u_1}_{L^2}^2 +\frac{1}{2}\norm{\comi y^{ \ell}\partial_y\partial_x^{m} u_0}_{L^2}^2\\
	\leq  \frac{[(m-7)!]^{2\sigma}}{\rho^{2(m-7)}}\big(\abs{ u_0}_{\rho,\sigma}^2 + \abs{ u_1}_{\rho,\sigma}^2\big)\leq  \frac{[(m-7)!]^{2\sigma}}{\rho^{2(m-7)}}\big(\abs{ u_0}_{2\rho_0,\sigma}^2 + \abs{ u_1}_{2\rho_0,\sigma}^2\big).
	\end{multline*}
	 As for the last term on the right of \eqref{uma++} we use \eqref{uint+} to conclude 
 	  \begin{equation*}\label{fet}
 	  	\Big|\int_0^t\inner{  \partial_y\partial_x^{m}u, \    (\partial_y\comi y^{2\ell})\partial_t\partial_x^{m}u}_{L^2}ds\Big|\leq C\frac{[(m-7)!]^{2\sigma}}{\rho^{2(m-7)}}\int_0^t \abs{u(s)}_{\rho,\sigma}^2ds.
 	  \end{equation*}
 	  Then the desired  estimate \eqref{sta} will follow if we can prove that
 	  \begin{multline}\label{ges}
	\sum_{j=0}^{m}{m\choose j}\Big|\int_0^{t} \Big( \comi y^{\ell} \big[(\partial_x^j u) \partial_{x}^{m-j+1}u   +(\partial_x^j v)\partial_{x}^{m-j}\partial_yu \big]  ,\ \comi y^{\ell}  \partial_t\partial_x^{m}u \Big)_{L^2}ds\Big|\\
	\leq  C  \int_0^{t} \abs{u(s)}_{ \rho,\sigma}^3 ds+ CC_* \int_0^{t}  \frac{\abs{u(s)}_{ \tilde\rho,\sigma}^2}{  \tilde\rho-\rho}  ds.
\end{multline} 
 	To do so we    write
 \begin{equation}\label{i2+}
 \begin{aligned}
& \sum_{j=0}^{m}{m\choose j}\Big|\int_0^{t} \Big( \comi y^{\ell} \big[(\partial_x^j u) \partial_{x}^{m-j+1}u   +(\partial_x^j v)\partial_{x}^{m-j}\partial_yu \big]  ,\ \comi y^{\ell}  \partial_t\partial_x^{m}u \Big)_{L^2}ds\Big|\\
 & \leq  \sum_{j=0}^{m}{{m}\choose j} \int_0^t \norm{  \comi y^{\ell}(\partial_x^j u )\partial_x^{m-j+1}u }_{L^2}\norm{  \comi y^{\ell}\partial_t \partial_x^{m}u }_{L^2} ds\\
 	&\quad +\sum_{j=0}^{m}{{m}\choose j} \int_0^t  \norm{ \comi y^{\ell} (\partial_x^j v )\partial_y \partial_x^{m-j}u }_{L^2}\norm{ \comi y^{\ell} \partial_t \partial_x^{m}u }_{L^2} ds.
 \end{aligned}
 \end{equation}
For the first term on the right side we have,  denoting  by $[p] $   the largest integer less than or equal to $p,$  
 \begin{equation}\label{estms}
 	\begin{aligned}
 		\sum_{j=0}^{m}{{m}\choose j}  \norm{\comi y^{\ell} (\partial_x^j u) \partial_x^{m-j+1}u }_{L^2} 
	\leq 	&\sum_{j=0}^{[m/2]}{{m}\choose j}  \norm{ \partial_x^j u }_{L^\infty} \norm{ \comi y^{\ell} \partial_x^{m-j+1}u }_{L^2} \\
	&+ \sum_{j= [m/2]+1}^m {{m}\choose j}  \norm{ \comi y^{\ell}\partial_x^j u}_{L^2}\norm{ \partial_x^{m-j+1} u }_{L^\infty}.
 	\end{aligned}
 \end{equation}
Using again \eqref{uint+} and the  Sobolev embedding  inequality 
\begin{equation*}
	\label{soblev}
	 \norm{F}_{L^\infty}\leq C\Big( \norm{  F}_{H_x^2({L_y^2)}}+ \norm{  \partial_y F}_{H_x^2({L_y^2)}}\Big)
	\end{equation*}
	which holds true for $x\in \mathbb R$ or $x\in\mathbb R^2$,
we compute, in view of \eqref{condi1},
	\begin{equation}\label{spe}
	\begin{aligned}
 	&\sum_{j=0}^{[m/2]}{{m}\choose j}  \norm{ \partial_x^j u }_{L^\infty} \norm{  \comi y^{\ell}\partial_x^{m-j+1}u }_{L^2}  \leq   C \sum_{j=5}^{[m/2]}\frac{m!} {j!(m-j)!} \frac{[(j-5)!]^\sigma}{ \rho^{j-5}} \frac{[(m-j-6)!]^\sigma}{(m-j+1)\rho^{m-j-6}} \abs{u}_{\rho,\sigma}^2 \\
   &\qquad\qquad+ C  \sum_{1\leq j\leq 4} \frac{m!} {j!(m-j)!}  \frac{[(m-j-6)!]^\sigma}{(m-j+1) \rho^{m-j-6}} \abs{u}_{   \rho,\sigma}^2+    C_*    \frac{[(m -6)!]^\sigma}{(m+1)\tilde\rho^{m-6}} \abs{u}_{ \tilde \rho,\sigma}, 
   \end{aligned}
\end{equation}
with $C_*$ the constant given in \eqref{condi1}.
Direct verification shows, observing $1\leq \sigma\leq 2,$ 
 \begin{equation*}
 \sum_{1\leq j\leq 4} \frac{m!} {j!(m-j)!}  \frac{[(m-j-6)!]^\sigma}{(m-j+1) \rho^{m-j-6}} \abs{u}_{   \rho,\sigma}^2   \leq C     \frac{[(m-7)!]^\sigma}{ \rho^{m-7}} \abs{u}_{  \rho,\sigma} ^2,  
 \end{equation*}
 \begin{eqnarray*}
 	\frac{[(m -6)!]^\sigma}{(m+1)\tilde\rho^{m-6}} \abs{u}_{ \tilde \rho,\sigma}\leq C\frac{m^{\sigma-1}}{\tilde\rho} \frac{[(m -7)!]^\sigma}{ \tilde\rho^{m-7}} \abs{u}_{ \tilde \rho,\sigma}\leq C\frac{m}{\tilde\rho} \frac{[(m -7)!]^\sigma}{\tilde\rho^{m-7}} \abs{u}_{ \tilde \rho,\sigma}
 \end{eqnarray*}
and   
\begin{eqnarray*}
\begin{aligned}
&  \sum_{j=5}^{[m/2]}\frac{m!} {j!(m-j)!} \frac{[(j-5)!]^\sigma}{ \rho^{j-5}} \frac{[(m-j-6)!]^\sigma}{(m-j+1)\rho^{m-j-6}} \abs{u}_{\rho,\sigma}^2\\
	&\leq  C \frac{   \abs{u}_{\rho,\sigma}^2}{\rho^{m-7}}  \sum_{j=5}^{[m/2]} \frac{m! [(j-5)!]^{\sigma-1} [(m-j-6)!]^{\sigma-1}} {j^5(m-j)^7} \\   
	& \leq   C\frac{   \abs{u}_{\rho,\sigma}^2}{\rho^{m-7}} \sum_{j=5}^{[m/2]} \frac{(m-7)! m^7} {j^5(m-j)^7 }   [(m-7)!]^{\sigma-1}  
	 \leq  C \frac{  [(m-7)!]^{\sigma} } {\rho^{m-7}}\abs{u}_{\rho,\sigma}^2. 
	 \end{aligned}
\end{eqnarray*}
Substituting  the above inequalities into \eqref{spe} gives
\begin{equation}\label{jm}
	\sum_{j=1}^{[m/2]}{{m}\choose j}  \norm{ \partial_x^j u }_{L^\infty} \norm{ \comi y^{\ell} \partial_x^{m-j+1}u }_{L^2} \leq C \frac{  [(m-7)!]^{\sigma} } {\rho^{m-7}}\abs{u}_{\rho,\sigma}^2+CC_*\frac{m}{\tilde\rho}  \frac{[(m-7)!]^\sigma}{\tilde\rho^{m-7}} \abs{u}_{ \tilde\rho,\sigma}.
\end{equation}
Following a similar argument for proving \eqref{jm},   we also have 
\begin{eqnarray*}
	\sum_{j=[m/2]+1}^m{{m}\choose j}  \norm{\comi y^{\ell} \partial_x^j u }_{L^2} \norm{  \partial_x^{m-j+1}u }_{L^\infty} \leq C \frac{  [(m-7)!]^{\sigma} } {\rho^{m-7}}\abs{u}_{\rho,\sigma}^2.
\end{eqnarray*}
Substituting the above inequality and \eqref{jm} into \eqref{estms},
we conclude
\begin{eqnarray*}
	\sum_{j=0}^{m}{{m}\choose j}  \norm{ \comi y^{\ell} (\partial_x^j u) \partial_x^{m-j+1}u }_{L^2} \leq C \frac{  [(m-7)!]^{\sigma} } {\rho^{m-7}}\abs{u}_{\rho,\sigma}^2+CC_*\frac{m}{\tilde\rho}  \frac{[(m-7)!]^\sigma}{\tilde\rho^{m-7}} \abs{u}_{ \tilde\rho,\sigma}.
\end{eqnarray*}
This, together with \eqref{uint+}, yields
\begin{eqnarray}\label{eng1}
\begin{aligned}
	& \int_0^t \sum_{j=0}^{m}{{m}\choose j}\norm{ \comi y^{\ell} (\partial_x^j u ) \partial_x^{m-j+1}u }_{L^2}\norm{  \comi y^{\ell}\partial_t \partial_x^{m}u }_{L^2} ds\\
	&\leq C \frac{[\inner{m-7}!]^{2\sigma}}{\rho^{2(m-7)}}  \int_0^{t}   \abs{u(s)}_{ \rho,\sigma}^3 ds+CC_*  [\inner{m-7}!]^{2\sigma}   \int_0^{t} \frac{m}{\tilde\rho}   \frac{\abs{u(s)}_{ \tilde\rho,\sigma}^2}{  \tilde \rho^{2(m-7)}}  ds\\
	&\leq C \frac{[\inner{m-7}!]^{2\sigma}}{\rho^{2(m-7)}}  \int_0^{t}   \abs{u(s)}_{ \rho,\sigma}^3 ds+CC_* \frac{[\inner{m-7}!]^{2\sigma}}{\rho^{2(m-7)}}  \int_0^{t}  \frac{\abs{u(s)}_{ \tilde\rho,\sigma}^2}{  \tilde\rho-\rho}  ds, 
	\end{aligned}
\end{eqnarray}
due to \eqref{kefact}.
Similarly we follow a similar argument as  above  to conclude
\begin{eqnarray}\label{eng2}
\begin{aligned}
	& \int_0^t  \sum_{j=0}^{m}{{m}\choose j} \norm{\comi y^{\ell} (\partial_x^j v) \partial_y \partial_x^{m-j}u }_{L^2}\norm{\comi y^{\ell} \partial_t \partial_x^{m}u }_{L^2} ds\\  
	&\leq  \int_0^t  \sum_{j=0}^{[m/2]}{{m}\choose j} \norm{ \partial_x^j v }_{L^\infty}\norm{\comi y^{\ell}  \partial_y \partial_x^{m-j}u }_{L^2}\norm{\comi y^{\ell} \partial_t \partial_x^{m}u }_{L^2} ds\\
	&\quad+ \int_0^t \sum_{j= [m/2]+1}^m {{m}\choose j}  \norm{ \partial_x^j v }_{L_x^2(L_y^\infty)}\norm{\comi y^{\ell}  \partial_y \partial_x^{m-j}u }_{L_x^\infty(L_y^2)}\norm{\comi y^{\ell} \partial_t \partial_x^{m}u }_{L^2} ds\\
	&\leq   C \frac{[\inner{m-7}!]^{2\sigma}}{\rho^{2(m-7)}}  \int_0^{t}   \abs{u(s)}_{ \rho,\sigma}^3 ds +CC_* \frac{[\inner{m-7}!]^{2\sigma}}{\rho^{2(m-7)}}  \int_0^{t}  \frac{\abs{u}_{ \tilde\rho,\sigma}^2}{  \tilde\rho-\rho}  ds.
	\end{aligned}
	\end{eqnarray}
where we have used the inequality that if $ \ell>1/2 $ and $ \partial_{x}u+\partial_{y}v=0 $ as well as $ v|_{ y=0}=0, $   
\begin{eqnarray*}
\begin{aligned}
\norm{\partial_{x}^{j} v}_{L_{y}^{\infty} L_{x}^{2}}
=&\norm{\int_{0}^{y}\partial_{\tilde y}\partial_{x}^{j} v(t,x,\tilde{y})d\tilde{y}}_{L_{y}^{\infty} L_{x}^{2}}
\leq\norm{\int_{0}^{y}\partial_{x}^{j+1} u(t,x,\tilde{y})d\tilde{y}}_{L_{y}^{\infty} L_{x}^{2}}\\
\leq&\int_{0}^{\infty}\norm{\comi{\tilde y}^{\ell}  \partial_{x}^{j+1} u(\cdot, \tilde{y})}_{L_{x}^{2}} \comi{\tilde y}^{-\ell} d \tilde{y}\\
\leq& \norm{\comi{y}^{\ell}  \partial_{x}^{j+1} u}_{L_{x, y}^{2}}\norm{\comi{y}^{-\ell}}_{L_{y}^{2}}
\leq C\norm{\comi{y}^{\ell}  \partial_{x}^{j+1} u}_{L_{x, y}^{2}},
\end{aligned}
\end{eqnarray*}
for all integers $j \geq 0$, all $t \in[0, T]$, and some positive constant $C=C(\ell)$.
Combining the two estimates \eqref{eng1} , \eqref{eng2} with \eqref{i2+}, we obtain \eqref{ges}. Thus we can get
the desired estimate \eqref{sta}.

\medskip
\noindent{\it Step (c)}   We combine the estimates \eqref{sta++} and \eqref{sta} to conclude, for any $m\geq 7$ and any  pair $(\rho,\tilde\rho)$ with $0<\rho<\tilde\rho\leq\rho_0$,
\begin{multline*}	\frac{\rho^{2(m-7)}} {[\inner{m-7}!]^{2\sigma}}\inner{\norm{\comi y^\ell\partial_t\partial_x^mu(t)}_{L^2}^2+\norm{\comi y^\ell\partial_y\partial_x^mu(t)}_{L^2}^2+m^2\norm{\comi y^\ell\partial_x^mu(t)}_{L^2}^2} \\ \leq CC\inner{\abs{u_0}_{2\rho_0,\sigma}+\abs{u_1}_{2\rho_0,\sigma}}  +   C  \int_0^{t} \inner{ \abs{u(s)}_{ \rho,\sigma}^2+\abs{u(s)}_{ \rho,\sigma}^3}ds+ CC_* \int_0^{t}  \frac{\abs{u(s)}_{ \tilde\rho,\sigma}^2}{  \tilde\rho-\rho}  ds.
\end{multline*}
It can be checked straightforwardly that the estimate
\begin{multline*}	
\sum_{0\leq m\leq 6}\inner{\norm{\comi y^\ell\partial_t\partial_x^mu(t)}_{L^2}^2+\norm{\comi y^\ell\partial_y\partial_x^mu(t)}_{L^2}^2+m^2\norm{\comi y^\ell\partial_x^mu(t)}_{L^2}^2} \\ \leq  C\inner{\abs{u_0}_{2\rho_0,\sigma}+\abs{u_1}_{2\rho_0,\sigma}}  +   C  \int_0^{t} \inner{ \abs{u(s)}_{ \rho,\sigma}^2+\abs{u(s)}_{ \rho,\sigma}^3}ds+ CC_* \int_0^{t}  \frac{\abs{u(s)}_{ \tilde\rho,\sigma}^2}{  \tilde\rho-\rho}  ds
\end{multline*}
holds true for  any  pair $(\rho,\tilde\rho)$ with $0<\rho<\tilde\rho\leq\rho_0$. As a result, we combine the two estimates above to complete the proof of Theorem \ref{thmapri} for the two-dimensional case. 
\end{proof}

\begin{proof}
	[Proof of Theorem \ref{thmapri} (Three-dimensional case)] It can be treated in the same way as that for two-dimensional case. In fact following the argument above gives that for any $m\geq 7$ we have,  recalling  $x=(x_1,x_2)$,
\begin{multline*}	\frac{\rho^{2(m-7)}} {[\inner{m-7}!]^{2\sigma}}\inner{\norm{\comi y^\ell\partial_t\partial_{x_j}^mu(t)}_{L^2}^2+\norm{\comi y^\ell\partial_y\partial_{x_j}^mu(t)}_{L^2}^2+m^2\norm{\comi y^\ell\partial_{x_j}^mu(t)}_{L^2}^2} \\ \leq  C\inner{\abs{u_0}_{2\rho_0,\sigma}+\abs{u_1}_{2\rho_0,\sigma}}  +   C  \int_0^{t} \inner{ \abs{u(s)}_{ \rho,\sigma}^2+\abs{u(s)}_{ \rho,\sigma}^3}ds+ CC_* \int_0^{t}  \frac{\abs{u(s)}_{ \tilde\rho,\sigma}^2}{  \tilde\rho-\rho}  ds,
\end{multline*}
with $j=1$ or $2$.  This along with the fact that
\begin{eqnarray*}
	\norm{\partial_x^\alpha F}_{L^2}\leq C \inner{\norm{\partial_{x_1}^{\abs\alpha} F}_{L^2}+\norm{\partial_{x_2}^{\abs\alpha} F}_{L^2}} 
\end{eqnarray*}
yields
\begin{multline*}
	\sup_{ \abs\alpha\geq 7} \frac{\rho^{\abs\alpha-7}}{[\inner{\abs\alpha-7}!]^{\sigma}} \Big(  \big\|\comi y^{\ell}  \partial_t\partial_x^\alpha f(t)\big\|_{L^2}+ \big\|\comi y^{\ell }  \partial_y\partial_x^\alpha      f(t)\big\|_{L^2}+\abs\alpha\norm{\comi y^{\ell } \partial_x^\alpha      f(t)}_{L^2}\Big)\\
	\leq  C\inner{\abs{u_0}_{2\rho_0,\sigma}+\abs{u_1}_{2\rho_0,\sigma}}  +   C  \int_0^{t} \inner{ \abs{u(s)}_{ \rho,\sigma}^2+\abs{u(s)}_{ \rho,\sigma}^3}ds+ CC_* \int_0^{t}  \frac{\abs{u(s)}_{ \tilde\rho,\sigma}^2}{  \tilde\rho-\rho}  ds.
\end{multline*}
The case when $\abs\alpha\leq 6$ is  straightforward. Then we have completed the proof of Theorem \ref{thmapri} for the three-dimensional case.
\end{proof}
     
\appendix
\section{Derivation of the boundary layer system}\label{secderivation}

In this section, we only give  the derivation of  two-dimensional (2D) boundary layer system in  \eqref{mhdsys+++} since the 3D can be derived in the same way.  We consider the following 2D hyperbolic version of Navier-Stokes  system in  $\Omega: =\mathbb R\times\mathbb R_+=\big\{ (x,y)\in\mathbb R_{+}^2;\  x\in\mathbb R,  y>0\big \}$
\begin{equation}\label{mhdsys}
\left\{
\begin{aligned}
&\eta\partial_t^2 \bm  u^{\eps}+\partial_t \bm  u^{\eps} +(\bm  u^{\eps} \cdot\nabla) \bm u^{\eps}-\eps^2 \Delta \bm u^{\eps}+\nabla   p^{\eps} =0,\\
&\nabla\cdot \bm u^{\eps}=0,\\
&{\bm u^{\eps}}|_{t=0}= {\bm u_0} ,\quad {\partial_t \bm u^{\eps}}|_{t=0}={\bm u_{1}},
\end{aligned}
\right.
\end{equation}
where $ \eta=1 $ and $ \bm u^{\eps}=(u^{\eps},v^{\eps}) $ denotes velocity fields, $ p^{\eps} = p^{\eps}(t, x, y) $ denotes the scalar pressure.  The above system is complemented with no-slip boundary conditions on the
velocity  fields,   that is, 
\begin{equation}\label{cond2}
( u^{\eps},v^{\eps})|_{ y=0}= (0,0). 
\end{equation}

\noindent
{\bf Away from the boundary:} We construct the approximate solution by the following expansion
  \begin{eqnarray*}\label{aexps}
  \left\{
  \begin{aligned}
   u^{\eps}\left(t, x,y\right)&\sim\sum_{j=0}^{2} \eps^{j}u^{I, j}(t,x,y) + o ( \eps  );\\
 v^{\eps}\left(t, x,y\right)&\sim\sum_{j=0}^{2} \eps^{j}v^{I, j}(t,x,y) + o (\eps  );\\
	  p^{\eps}\left(t,x, y\right)&\sim\sum_{j=0}^{2}\eps^{j} p^{I, j}(t,x, y)+ o( \eps  ).
  \end{aligned}
  \right.
  \end{eqnarray*}
Plugging the above expansion into the hyperbolic Navier-Stokes  system \eqref{mhdsys}, and letting $ \eps\rightarrow 0 $, then matching the $ O(\eps^0) $ term, we
find that $ (u^{I, 0}(t,x,y), v^{I, 0}(t,x,y) ,p^{I, 0}(t,x,y)) $ should satisfy the hyperbolic Euler equations  
 \begin{equation}\label{imhdsys}
\left\{
\begin{aligned}
&\partial_t^2 \bm  u^{I, 0}+\partial_t \bm  u^{I, 0} +(\bm  u^{I, 0} \cdot\nabla) \bm u^{I, 0} +\nabla   p^{I, 0} =0,\\
&\nabla\cdot \bm u^{I, 0}=0,\\
\end{aligned}
\right.
\end{equation}  
Naturally, we endow \eqref{imhdsys} with the same initial data as viscous flow in \eqref{mhdsys}. That is 
\begin{equation*}
{\bm u^{I, 0}}|_{t=0}=(u_0,v_0)(x,y),\quad {\partial_t \bm u^{I, 0}}|_{t=0}=(u_1,v_1)(x,y),
\end{equation*}
 For well-posedenss of the system \eqref{imhdsys}, we may impose a homogeneous Dirichlet boundary condition for the normal components of velocity field:
\begin{equation}\label{cond3}
v^{I, 0} |_{y=0}=0.
\end{equation}
 
 \noindent
 {\bf Near the boundary $ y = 0 $:} Comparing the boundary conditions \eqref{cond2} with \eqref{cond3},  there is a mismatch between
 the tangential component $ u^{\eps}(t,x,y) $ and $ u^{I, 0}(t,x,y) $ on the boundary $ \{y=0\} $. 
 To derive the equations for boundary layers, 
 we carry out the multi-scale analysis as Prandtl in \cite{MR1697762}. 
 We suppose that the solution  $ (u^{\eps},v^{\eps}) $ of the hyperbolic Navier-Stokes  system \eqref{mhdsys} accepts the following multi-scale formal expansions
 \begin{equation}\label{nexps}
 \left\{
 \begin{aligned}
 u^{\eps}\left(t, x,y\right)&\sim\sum_{j=0}^{2} \eps^{j}\left[u^{I, j}(t,x,y)+u^{B, j}(t,x, \frac{y}{\eps})\right] + o ( \eps  )
 ;\\
 v^{\eps}\left(t, x,y\right)&\sim\sum_{j=0}^{2} \eps^{j}\left[v^{I, j}(t,x,y)+v^{B, j}(t,x, \frac{y}{\eps})\right]
  + o (\eps  );\\
 p^{\eps}\left(t,x, y\right)&\sim\sum_{j=0}^{2}\eps^{j} \left[p^{I, j}(t,x, y)+ p^{B, j}(t,x,  \frac{y}{\eps})\right]+ o ( \eps  ).
 \end{aligned}
 \right.
 \end{equation} 
 where $(u^{I, j},v^{I, j})$ are inner functions, 
 $(u^{B, j},v^{B, j})$  and $p^{B, j}$ are the boundary layer functions near the boundary $ y=0. $ Moreover, the boundary layer functions $u^{B, j},v^{B, j},p^{B, j}$ exponentially go to zero as $\tilde{y}=\frac{y}{\eps}\rightarrow+\infty $ $ (\eps\rightarrow 0).$
 
 First, we substitute the ansatz given in \eqref{nexps} into the first equation of \eqref{mhdsys} to obtain
 
 \begin{equation}\label{cks}
 \begin{aligned}
 &\sum_{j\geq 0} \eps^{j}\partial_t^2(u^{I, j}+u^{B, j})
 +\sum_{j\geq 0} \eps^{j}\partial_t(u^{I, j}+u^{B, j})+
 \sum_{j\geq 0} \eps^{j}\sum_{k=0}^{j}(u^{I, k}+u^{B, k})(\partial_x  u^{I, j-k}+\partial_x  u^{B, j-k}) 
 \\&\quad
 +
 \sum_{j\geq 0} \eps^{j}\sum_{k=0}^{j}\big[(v^{I, k}+v^{B, k})\partial_yu^{I, j-k} \big]+\sum_{j\geq 0} \eps^{j}\sum_{k=0}^{j+1}(v^{I, k}+v^{B, k})\partial_{\tilde{y}}u^{B, j-k+1}+ 
 \frac{1}{\eps}v^{I, 0}\partial_{\tilde{y}}u^{B, 0}
 \\&\quad
 +\sum_{j\geq 0} \eps^{j}\partial_x p^{I, j}+\sum_{j\geq 0} \eps^{j}\partial_x p^{B, j}
 =
 \sum_{j\geq 0} \eps^{j}\big(\eps^{2}\partial_x^2u^{I, j} +\eps^{2}\partial_y^2u^{I, j}+\eps^{2}\partial_x^2u^{B, j}+\partial_{\tilde{y}}^2u^{B, j}\big),\\
 &\sum_{j\geq 0} \eps^{j}\partial_t^2(v^{I, j}+v^{B, j})
 +\sum_{j\geq 0} \eps^{j}\partial_t(v^{I, j}+v^{B, j})+
 \sum_{j\geq 0} \eps^{j}\sum_{k=0}^{j}(u^{I, k}+u^{B, k})(\partial_xv^{I, j-k}+\partial_xv^{B, j-k}) 
 \\&\quad
 +
 \sum_{j\geq 0} \eps^{j}\sum_{k=0}^{j}(v^{I, k}+v^{B, k})\partial_yv^{I, j-k} +
 \sum_{j\geq 0} \eps^{j}\sum_{k=0}^{j+1}(v^{I, k}+v^{B, k})\partial_{\tilde{y}}v^{B, j-k+1}+ 
 \frac{1}{\eps}v^{I, 0}\partial_{\tilde{y}}v^{B, 0}
 \\&\quad+
 \sum_{j\geq 0} \eps^{j}\partial_y p^{I, j}+\frac{1}{\eps}\partial_{\tilde{y}} p^{B, 0}+\sum_{j\geq 0} \eps^{j}\partial_{\tilde{y}} p^{B, j+1}\\
 &=
 \sum_{j\geq 0} \eps^{j}\big(\eps^{2}\partial_x^2v^{I, j} +\eps^{2}\partial_y^2v^{I, j}+\eps^{2}\partial_x^2v^{B, j}+\partial_{\tilde{y}}^2v^{B, j}\big),\\ 
 \end{aligned}
 \end{equation}
 Second, we substitute the ansatz given in \eqref{nexps} into the div-free condition  $ \nabla\cdot \bm u^{\eps}=0 $ of \eqref{mhdsys} to get 
 \begin{equation*}\label{divc}
 \left\{
 \begin{aligned}
 &\partial_x   u^{I, j}+\partial_y v^{I, j}=0, j\geq 0\\
 &\partial_x   u^{B, j}+\partial_{\tilde{y}} v^{B, j+1}=0, j\geq 0\\
 &\partial_{\tilde{y}} v^{B, 0}=0,\\
 \end{aligned}
 \right.
 \end{equation*}  
 thus, we have 
 \begin{equation}\label{pdivc}
  v^{B, 0}=0.\\
 \end{equation}

Third, we substitute the ansatz given in \eqref{nexps} into the  boundary condition of \eqref{mhdsys} to get
 \begin{equation*}\label{bou}
 \left\{
 \begin{aligned}
 &u^{I, j}(t,x,0)+ u^{B, j}(t,x,0)=0, j\geq 0\\
 &v^{I, j}(t,x,0)+ v^{B, j}(t,x,0)=0, j\geq 0\\
 \end{aligned}
 \right.
 \end{equation*}  
Since $ v^{I, 0}(t,x,0)=0 $, we have  
 \begin{equation*}\label{bou+}
 v^{B, 0}(t,x,0)=0.\\
 \end{equation*} 
 
Last, by using the Taylor expansion in $ y $  , we write $u^{I, 0}(t,x,y)$ as  
\begin{equation*}
	u^{I, 0}(t,x,y)=u^{I, 0}(t,x,0)+y\partial_y u^{I, 0}(t,x,0)+\frac{y^2}{2}\partial_y^2 u^{I, 0}(t,x,0)+\cdots=\overline{u^{I, 0}}+\eps\tilde {y} \overline{\partial_y u^{I, 0}}+o(\eps),
\end{equation*}
where here and below we use the notation $ \overline{f}$
to stand for the trace of a function $ f $ on the
boundary $ {y = 0} $.  Similarly, we write
\begin{equation*}
\begin{aligned}
& v^{I, 0}(t,x,y) =\eps\tilde {y} \overline{\partial_y v^{I, 0}}+o(\eps),\\
& p^{I, 0}(t,x,y) =\overline{p^{I, 0}}+\eps\tilde {y} \overline{\partial_y p^{I, 0}}+o (\eps).\\
\end{aligned}
\end{equation*} 
Then we insert the above three terms into \eqref{cks} and moreover consider the same order terms of $\eps$. By combining \eqref{pdivc}   
we obtain as follows:

\noindent  
{\bf At the order $\eps^{-1}$:} From the second equation in \eqref{cks}  we get
 \begin{equation*}
 \partial_{\tilde y} p^{B, 0}= 0.
 \end{equation*}
 this together with the assumption that $p^{B, j},j\geq0$ goes to $0$ as $\tilde y\rightarrow+\infty$ implies
 \begin{equation*}\label{np}
 p^{B, 0}\equiv 0.
 \end{equation*} 
\noindent  
{ \bf At the order $\eps^0$:}  
From the second equation in \eqref{cks}, we obtain 
 \begin{equation*}
\partial_{\tilde y} p^{B, 1}= 0.
\end{equation*}
 this together with the assumption that $p^{B, j},j\geq0 $ goes to $0$ as $\tilde y\rightarrow+\infty$ implies
\begin{equation*}\label{np+}
p^{B, 1}\equiv 0.
\end{equation*}
On the other hand, we consider the first equation in \eqref{cks} only with $ O(\eps^0) $ terms to
yield that 
\begin{equation*}
\partial_t^2   u^{B,0}+\partial_t   u^{B,0}+u^{B,0}\partial_x\overline{u^{I, 0}}+(\overline{u^{I, 0}}+u^{B,0})\partial_x\cdot u^{B,0}+(\overline{v^{I, 1}}+u^{B,1}+\tilde{y} \overline{\partial_y v^{I, 0}})\partial_{\tilde{y}}u^{B,0}=\partial_{\tilde{y}}^2u^{B,0}
\end{equation*}
Thus, as a result, we deduce that $  (u^{B, 0},v^{B, 1}) $ satisfies the equations as follows:
\begin{equation*}
 \left\{
\begin{aligned}
&\partial_t^2   u^{B,0}+\partial_t   u^{B,0}+u^{B,0}\partial_x\overline{u^{I, 0}}+(\overline{u^{I, 0}}+u^{B,0})\partial_x\cdot u^{B,0}\\
&\qquad\qquad\qquad\qquad+(\overline{v^{I, 1}}+u^{B,1}+\tilde{y} \overline{\partial_y v^{I, 0}})\partial_{\tilde{y}}u^{B,0}=\partial_{\tilde{y}}^2u^{B,0},\\
&\partial_x\cdot u^{B,0}+\partial_{\tilde{y}}v^{B,1}=0,\\
&u^{B, 0}(t,x,0)=-u^{I, 0}(t,x,0),\quad
v^{B, 1}(t,x,0)=-v^{I, 1}(t,x,0),\quad \lim_{\tilde{y}\rightarrow +\infty}u^{B, 0}(t,x,\tilde{y})=0,\\
&( u^{B, 0},v^{B, 1})|_{ t=0}= (0,0), \quad (\partial_t u^{B, 0},\partial_tv^{B, 1})|_{ t=0}= (0,0).\\
\end{aligned}
\right.
\end{equation*}
Denoting
\begin{equation*}
\begin{aligned}
&u ( t , x , \tilde y  ) =\overline{ u ^{I, 0}}  + u ^{B, 0} ( t , x , \tilde { y } ) ;  \\ 
&v ( t , x ,\tilde y ) =\overline{v^{I, 1}}+u^{B,1}( t , x , \tilde { y } )+\tilde{y} \overline{\partial_y v^{I, 0}}.\\
\end{aligned}
\end{equation*}
which along with the system \eqref{imhdsys} at $ y=0 $ to yield the hyperbolic Prandtl boundary layer equations
\begin{equation}\label{fzk}
\left\{
\begin{aligned}
&\partial_t^2   u+\partial_t   u+u\partial_x\cdot u+v\partial_{\tilde{y}}u+\partial_xP=\partial_{\tilde{y}}^2u,\\
&\partial_x\cdot u+\partial_{\tilde{y}}v=0,\\
&u(t,x,0)=v(t,x,0)=0,\quad \lim_{\tilde{y}\rightarrow +\infty}u(t,x,\tilde{y})=U(t,x),\\
& u|_{ t=0}= \overline{u_{0}(0,x)}, \quad \partial_t u|_{ t=0}= \overline{u_{1}(0,x)},\\
\end{aligned}
\right.
\end{equation}
where 
\begin{equation*}
P(t,x)=\overline{p^{I, 0}}, \quad U(t,x)=\overline{u^{I, 0}}
\end{equation*}
and the known functions $ U(t, x) $ and $ P(t, x) $ satisfy the following law:
\begin{equation*}
\partial_t^2U+\partial_t U+(U\cdot\partial_x)U+\partial_xp=0.
\end{equation*}
For simplicity of notations, we have replaced $ \tilde{y} $ by $ y $  in the boundary layer
system \eqref{mhdsys+++}. 
\begin{remark}
Note that the boundary layer problem \eqref{fzk} can be derived by using the following scale transform:
\begin{equation*}
t=t,x=x,\tilde{y}=\frac{y}{\eps},
\end{equation*}
and
\begin{equation*}
\left\{
\begin{aligned}
&u ^ { \eps } ( t , x , y ) =  u ( t , x , \tilde { y } )  ,\quad v ^ { \eps } ( t , x , y ) =   \eps   v ^ { b } ( t , x , \tilde { y } ) , \\ 
&p ^ { \eps } ( t , x , y ) =  p ( t , x , \tilde { y } ).
\end{aligned}
\right.
\end{equation*}
\end{remark}

\bigskip
\noindent {\bf Acknowledgements.} The work was supported by NSF of China(Nos. 11961160716, 11871054, 12131017)  and  the Natural Science Foundation of Hubei Province No. 2019CFA007.

\end{document}